\documentclass[11pt]{article}

\usepackage{amsfonts}
\usepackage{mathrsfs}
\usepackage{indentfirst}

\usepackage[latin1]{inputenc}
\usepackage{amsmath}
\usepackage{amssymb,float}
\usepackage{latexsym}
\usepackage{epstopdf}
\usepackage{geometry}   
\usepackage[active]{srcltx}
\usepackage{graphicx}
\usepackage{epstopdf}
\usepackage{todonotes}
\usepackage[
bookmarks=true,         
bookmarksnumbered=true, 
colorlinks=true, pdfstartview=FitV, linkcolor=blue, citecolor=blue,
urlcolor=blue]{hyperref}

\topmargin -2cm
\oddsidemargin -0.06cm
\evensidemargin -0.06cm
\textwidth 16.42cm
\textheight 23.96cm
\parskip 7pt

\setlength{\parindent}{25pt}

\widowpenalty10000
\clubpenalty10000

\newtheorem {theorem}{Theorem}[section]

\newtheorem {corollary}{Corollary}[section]

\newtheorem{lemma}{Lemma}[section]

\newtheorem{remark}{Remark}[section]

\newenvironment{proof}[0]{\paragraph{Proof.}}{\rule{0.5em}{0.5em}}

\def\qed{{\hfill $\square$ \bigskip}}
\def\R{{\mathbb R}}
\def\E{{{\mathbb E}\,}}
\def\P{{\mathbb P}}

\def\Z{{\mathbb Z}}
\def\L{{\mathcal{L}}}
\def\F{{\cal F}}
\def\I{{\mathcal{I}}}
\def\N{{\mathbb N}}

\def\cF{{\cal F}}


\begin{document}

\begin{center}
{\LARGE  Shannon entropy estimation for linear processes}

\bigskip 

\bigskip Timothy Fortune$^{a}$ and Hailin Sang$^{b}$

\bigskip$^{a}$ Department of Statistics, University of Connecticut, Storrs, CT 06269, USA
timothy.fortune@uconn.edu

\bigskip$^{b}$ Department of Mathematics, University of Mississippi, University, MS 38677,  USA
sang@olemiss.edu
\end{center}

\begin{abstract}
\noindent In this paper, we estimate the Shannon entropy $S(f) = -\E[ \log (f(x))]$ of a one-sided linear process with probability density function $f(x)$.  We employ the integral estimator $S_n(f)$, which utilizes the standard kernel density estimator $f_n(x)$ of $f(x)$.  We show that $S_n (f)$ converges to $S(f)$ almost surely and in $\L^2$ under reasonable conditions.

\vskip.2cm

\vskip.2cm \noindent {\bf Keywords}:
\noindent  linear process, kernel entropy estimation, Shannon entropy 


\vskip.2cm 
\end{abstract}

\section{Introduction}
Let $f(x)$ be the common probability density function of a sequence $\{X_n\}_{n=1}^{\infty}$ of identically distributed observations. The associated Shannon entropy
\begin{align} \label{sf}
S(f) = \E [-\log f(X) ] = -\int f(x)\;\log f(x) \; dx
\end{align}
of such an observation was first introduced by Claude Shannon \cite{Shannon}.  In his 1948 paper, Shannon utilized this tool in his mathematical investigation of the theory of communication.  Today, entropy is widely applied in the fields of information theory, statistical classification, pattern recognition and so on, since it is a measure of the amount of uncertainty present in a probability distribution.

In the literature, several estimators for the Shannon entropy have been introduced.  See Beirlant et al. \cite{Beirlant} for an overview.  Many of these estimators have been studied in cases where the data is independent.  In 1976, Ahmad and Lin \cite{Ahmad} obtained results using the resubstitution estimator $H_n = -\frac{1}{n} \sum_{i=1}^{n} \ln f_n (X_i)$ for independent data $\{X_i\}_{i=1}^n$.  In particular, he showed consistency in the first and second mean under certain regularity conditions.  Here, $f_n(x)$ is the kernel density estimator.  Dmitriev and Tarasenko \cite{Dmitriev} reported results in 1973 for estimating functionals of the type $\int H\big(f(x), f'(x), ..., f^{k}(x)\big) dx$, where the common density $f(x)$ of the independent $X_i$ is assumed to have at least $k$ derivatives.  Plugging in kernel density estimators (see their paper and references therein) for the arguments of $H$ and integrating only over the symmetric interval $[-k_n, k_n]$, which is determined by a sequence $\{k_n\}_{n=1}^\infty$ of a certain order, they provided a result for the estimation of Shannon entropy using the estimator that Beirlant et al. \cite{Beirlant} refer to as the integral estimator.  Their results give conditions for almost sure convergence.

Interestingly enough, Dmitriev and Tarasenko \cite{Dmitriev} also provided (because their work is a more general investigation of functionals) a result for the estimation of the quadratic R\'{e}nyi entropy $Q(f)=\int f^2(x) dx$.  Conditions are provided specifically for the almost sure convergence of their estimator to the true value $Q(f)$.  The estimation of R\'{e}nyi entropy for the dependent case is challenging.  A dependent case is treated by Sang, Sang, and Xu \cite{Sang}.  They studied the estimation of the quadratic Entropy for the one-sided linear process.  Utilizing the Fourier transform along with the projection method, they demonstrate that the kernel entropy estimator satisfies a central limit theorem for short memory linear processes. 
 
To study the Shannon entropy for dependent data is also a challenging problem, and to the best of our knowledge, general results for the Shannon entropy estimation of regular time series data are still unknown.  In this paper, we study the Shannon entropy $S(f)$ for the one-sided linear process
\begin{align}\label{lp}
X_n=\sum\limits_{i=0}^\infty a_i \varepsilon_{n-i},
\end{align}
where the innovations $\varepsilon_i$ are independent and identically distributed real valued random variables on some probability space $(\Omega, \mathcal{F}, \P)$ with mean zero and finite variance $\sigma_\varepsilon^2$ and where the collection $\{a_i : i\ge 0 \}$ of real coefficients satisfies $\sum\limits_{i=0}^\infty a_i^2 < \infty$.  Additionally, we will require that the common density $f_{\varepsilon} (x)$ of the innovations be bounded.  The estimator we utilize employs the kernel method, which was first introduced by Rosenblatt \cite{R} and Parzen \cite{Parzen}.  The kernel estimator will be denoted by
\begin{align} \label{kernel_est}
f_n(x) = \frac{1}{n h_n} \sum_{i=1}^{n} K \left( \frac{x - X_i}{h_n} \right),
\end{align}
where the sequence $ \{h_n\}_{n=1}^{\infty} $ provides the bandwidths, and $ K: \R \to \R $ is the kernel function which satisfies $\int_\R K(x)dx=1$.  Typically, the kernel function is a probability density function.

This method has proven to be successful in estimating probability density functions and their derivatives, regression functions, etc. in both the independent and dependent setting.  For the independent setting, see the books (Devroye and Gy\"{o}rfi \cite{DG}; Silverman \cite{Silverman}; Nadaraya \cite{Nadaraya}; Wand and Jones \cite{WJ};  Schimek \cite{Schimek}; Scott \cite{Scott}) and the references therein.  For the dependent setting, we refer the reader to (Tran \cite{Tran}; Honda \cite{Honda}; Wu and Mielniczuk \cite{WM}; Wu, Huang and Huang \cite{WHH}). Bandwidth selection is an important issue in kernel density estimation, and there is much research in this direction. See, e.g., Duin \cite{D}, Rudemo \cite{R} and Slaoui \cite{S1, S2}. 

A few remarks about notation and terms used in the paper follow.  Let $\{a_{n}\}_{n=1}^{\infty}$ and $\{b_{n}\}_{n=1}^{\infty}$ be real-valued sequences.
 By $a_{n}=o(b_{n})$ we understand that $a_{n}/b_{n}\rightarrow0$ and $a_{n}=O(b_{n})$ means that $\lim\sup|a_{n}/b_{n}|<C$ for some positive number $C$. Essentially, this is the standard Landau little oh and big oh notation.  When we write, $a_n \ll b_n$, we mean $a_n = o(b_n)$, and as one might guess, $b_n \gg a_n$ means $a_n \ll b_n$.  Also, we employ the notation $a_n \asymp b_n$ to indicate that $0 < \liminf_{n\to\infty} \frac{a_n}{b_n} \leq \limsup_{n\to\infty} \frac{a_n}{b_n} < \infty$.   
A function $l:[0,\infty)\rightarrow{\mathbb{R}}$ is referred to as \textit{slowly varying} (at $\infty$) if it is positive and measurable on $[A,\infty)$ for some $A\in{\mathbb{R}}^{+}$ such that $\lim\limits_{x\rightarrow\infty}l(\lambda x)/l(x)=1$ holds for each $\lambda\in{\mathbb{R}}^{+}$.  The set of all functions $g:\R\to\R$ which are \textit{H\"{o}lder continuous} of some order $r$ will be denoted as $\mathcal{C}^r(\R)$.  That is, for each $g\in\mathcal{C}^r(\R)$ there exists $C_g\in\R^+$ such that for all $x, x' \in \R$, we have $|g(x)-g(x')| \leq C_g |x-x'|^r$, and when $r=1$, we recognize this as the well-known \textit{Lipschitz condition}.  The notation $\L^p(E)$ with $0<p<\infty$ represents the set of all real-valued functions $f$ defined on some measure space $(E,\mathcal{A},\mu)$ having the property that $\int_E |f(x)|^p\,d\mu<\infty$.  In the case that $E=\R$ and unless otherwise specified, the measure $\mu$ is tacitly understood to be Lebesgue measure and $\mathcal{A}$ is assumed to contain the Borel sets.  $\L^\infty(E)$ refers to the set of real-valued functions defined on $E$ which are bounded almost everywhere.  Whenever the domain space of the function is understood, we may simply write $\L^p$.  

\newpage
The following are \textit{bandwidth, kernel, and density conditions} that we shall refer to throughout this paper.
\begin{description}
	\item [\textbf{B.1}] $ h_n \asymp (n^{-1} \log n)^\frac{1}{5}$.
	\item [\textbf{K.1}] $K\in\mathcal{C}^\iota(\R)$ for some $\iota\in(0,1]$ is bounded with bounded support.
	\item [\textbf{K.2}] $ \int u K(u) \ du =0 $.
	\item [\textbf{D.1}] $ f_\varepsilon, f'_\varepsilon, f''_\varepsilon \in \L^\infty(\R) $.
	\item [\textbf{D.2}] $ f_\varepsilon, f'_\varepsilon, f''_\varepsilon \in \L^2(\R).$
	\item [\textbf{D.3}] $ f'' \in \L^\infty(\R) $.
\end{description}
Notice that the bandwidth, kernel, and density conditions are prefixed using \textbf{B}, \textbf{K}, and \textbf{D}, respectively.

In this first section, we have provided an introduction to the problem, a survey of past research in this area, and the notation to be used throughout.  The main results are reported in section two.  In section three, we present the proofs of the main results. Finally,  the appendix introduces the reader to foundational results which will be required in the proof of our main results.

\section{Main Results}\label{main}
If $\{\varepsilon_{i}:\; i\in\Z\}$ is a sequence of independent and identically distributed random variables over a common probability space $(\Omega, \mathcal{F}, \P)$ in $\L^{q}(\Omega)$ for some $q>0$, $\E\varepsilon_{i}=0$ when $q\geq 1$, and $\{a_{i}\}^{\infty}_{i=0}$ is a sequence of real coefficients such that $\sum\limits_{i=0}^{\infty}|a_{i}
|^{2\wedge q}<\infty$, then the linear process $X_n$ given in (\ref{lp}) exists and is well-defined. For the case $q\geq 2$ where the innovations have finite variance, we say that the process has \textbf{short memory (short range dependence)} if $\sum\limits_{i=0}^{\infty}|a_i|<\infty$ and $\sum\limits_{i=0}^{\infty} a_i\neq 0$ and \textbf{long memory (long range dependence)} otherwise.  Throughout, we assume that each $\varepsilon_{i}\in \L^q$ with $q\geq 2$.

Let $f(x)$ be the probability density function of the linear process $X_n=\sum\limits_{i=0}^\infty a_i\varepsilon_{n-i}$, $ n \in \mathbb{N}$ defined in (\ref{lp}).
\;\;In this paper, we estimate the Shannon Entropy $-\int f(x) \log f(x)\;dx$ of the linear process.
\;\;To do this, we employ the integral estimator 
\begin{align} \label{IntEst}
S_n(f) = - \int_{A_n} f_n(x)\;\log f_n(x)\;dx,
\end{align}
where $ f _n(x) $ is the standard kernel density estimator defined in (\ref{kernel_est}).  The (random) sets $A_n$ are given by
\begin{align} \label{A_n}
A_n = \{ x \in \R : 0 < \gamma_n \leq f_n (x) \},
\end{align}
where $\{\gamma_n\}_{n=1}^{\infty}$ is an appropriately defined sequence in $\R^+$ that converges to zero.

Our estimator utilizes the kernel method of density estimation, and we will accordingly require adherence of the kernel to certain conditions.  In addition, we impose some conditions on the bandwidths and on some of the densities of the problem.  These conditions were listed in the previous section.  Based on these conditions, let us consider the properties of the estimator (\ref{IntEst}).  We proceed in a manner similar to the analysis done by Bouzebda and Elhattab \cite{BE} for the independent case.  

\begin{theorem} \label{thm1} 
	Let $\{X_n : n\in\N\}$ be the linear process given in (\ref{lp}), and assume that it has short memory.  Furthermore, assume that $S(f)$ is finite.
	If the bandwidth, kernel, and density conditions listed earlier are satisfied, then
	\begin{align*}
	\limsup_{n\to\infty}\;\bigg(\frac{n\;\gamma_n^5}{\log n}\bigg)^\frac{2}{5}\;\;\bigg|\;S_n(f) - \int_{A_n} \big(-\log f(x)\big) f(x)\;dx\;\bigg|
	\end{align*}
	is bounded almost surely whenever the condition $ \gamma_n \gg h_n $ is imposed on the sequence $\{\gamma_n\}_{n=1}^{\infty}$.
\end{theorem}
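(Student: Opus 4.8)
The plan is to reduce everything to a single uniform almost-sure bound on $\sup_x|f_n(x)-f(x)|$ and then extract the stated rate by a second-order Taylor expansion of the entropy integrand. Write $g(u)=-u\log u$, so that $g'(u)=-\log u-1$ and $g''(u)=-1/u$. Since both integrals in the theorem are taken over the \emph{same} random set $A_n$, the difference collapses to
\begin{align*}
S_n(f) - \int_{A_n}\big(-\log f(x)\big)f(x)\,dx = \int_{A_n}\big[g(f_n(x)) - g(f(x))\big]\,dx,
\end{align*}
so no comparison of differing domains is needed and the analysis becomes pointwise in $x$.

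The engine of the proof is the uniform almost-sure convergence rate of the kernel estimator, which I would establish (or invoke from the foundational results of the appendix) in the form
\begin{align*}
\sup_{x\in\R}\,|f_n(x) - f(x)| = O(\delta_n) \quad\text{a.s.}, \qquad \delta_n := \Big(\tfrac{\log n}{n}\Big)^{2/5}.
\end{align*}
Under \textbf{B.1} the deterministic bias is $O(h_n^2)=O(\delta_n)$ (here \textbf{K.2} kills the first-order bias term and \textbf{D.3} controls the second-order term), while the stochastic fluctuation $\sup_x|f_n-\E f_n|$ is $O(\sqrt{\log n/(nh_n)})=O(\delta_n)$; the density conditions \textbf{D.1}, \textbf{D.2}, the kernel regularity \textbf{K.1}, and the short-memory assumption are exactly what make this fluctuation bound hold for the dependent data. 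With this $\delta_n$ the normalizing factor is simply $(n\gamma_n^5/\log n)^{2/5}=\gamma_n^2/\delta_n$. The hypothesis $\gamma_n\gg h_n\asymp\delta_n^{1/2}\gg\delta_n$ forces $\gamma_n\gg\delta_n$, so for all large $n$ the inequality $\sup_x|f_n-f|\le C\delta_n\le\gamma_n/2$ holds a.s.; consequently on $A_n$ we have $f(x)\ge f_n(x)-C\delta_n\ge\gamma_n/2$, and every intermediate value $\xi_x$ between $f(x)$ and $f_n(x)$ also satisfies $\xi_x\ge\gamma_n/2$. A crude measure estimate $|A_n|\le\gamma_n^{-1}\int_{A_n}f_n\le\gamma_n^{-1}\int_{\R}|f_n(x)|\,dx\le\gamma_n^{-1}\int_{\R}|K(u)|\,du=O(\gamma_n^{-1})$ completes the geometric picture.

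With these facts in hand I would Taylor expand $g(f_n)-g(f)=g'(f)(f_n-f)+\tfrac12 g''(\xi_x)(f_n-f)^2$ and estimate the two pieces separately. For the remainder, $|g''(\xi_x)|\le 2/\gamma_n$ on $A_n$, so its contribution is at most $\gamma_n^{-1}\big(\sup_x|f_n-f|\big)^2|A_n|=O(\delta_n^2/\gamma_n^2)$, which after multiplication by $\gamma_n^2/\delta_n$ is $O(\delta_n)\to 0$. For the linear term, on $A_n$ the density is trapped in $[\gamma_n/2,\,\|f\|_\infty]$ — with $\|f\|_\infty<\infty$ following from \textbf{D.3} together with $f\in\L^1(\R)$ by an interpolation inequality — whence $\sup_{A_n}|g'(f)|=O(\log(1/\gamma_n))$; bounding $\int_{A_n}|g'(f)|\,|f_n-f|\le\sup_{A_n}|g'(f)|\cdot\sup_x|f_n-f|\cdot|A_n|=O\big(\log(1/\gamma_n)\,\delta_n/\gamma_n\big)$ and multiplying by $\gamma_n^2/\delta_n$ yields $O\big(\gamma_n\log(1/\gamma_n)\big)\to 0$. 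Summing the two estimates shows the normalized quantity is in fact $o(1)$ almost surely, which is more than enough for the claimed boundedness of the $\limsup$.

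The substantive obstacle is the uniform almost-sure rate cited above: for i.i.d.\ data it is classical, but for the short-memory linear process one must control $\sup_x|f_n-\E f_n|$ uniformly in $x$, and this is where the real work lies — it requires a maximal/concentration inequality for sums of the form $\sum_i\big[K((x-X_i)/h_n)-\E K((x-X_i)/h_n)\big]$ valid for the dependent sequence, combined with a discretization (chaining) argument in $x$ that exploits the H\"older regularity and bounded support in \textbf{K.1}. Everything after that bound is deterministic real-analysis bookkeeping, and the appearance of $\gamma_n^{-1}$ factors (through both $g''$ and $|A_n|$) is precisely what the normalization $(n\gamma_n^5/\log n)^{2/5}$ is calibrated to absorb.
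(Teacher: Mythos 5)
Your proposal is correct, and it reaches the stated conclusion (indeed a slightly stronger one), but it takes a genuinely different route from the paper. Both arguments rest on the same engine, the almost-sure uniform rate $\sup_{x}|f_n(x)-f(x)|=O\big((\log n/n)^{2/5}\big)$ imported from Wu, Huang and Huang (the paper's Lemma 3.1), and both exploit $\gamma_n\gg h_n$ to make $\sup_x|f_n-f|/\gamma_n\to 0$. Where you differ is in how the integrand is handled: the paper splits the difference algebraically into $I_{n,1}=-\int_{A_n}[f_n-f]\log f_n$ and $I_{n,2}=-\int_{A_n}f\,[\log f_n-\log f]$ and controls these with the elementary inequalities $|\log z|\le z+z^{-1}$ and $|\log z|\le|z-1|+|z^{-1}-1|$, which lets it avoid ever bounding the Lebesgue measure of $A_n$ (it only uses $\int_{A_n}f_n\le 1$ and $\int_{A_n}f\le 1$); you instead Taylor-expand $g(u)=-u\log u$ to second order and pay for the resulting $g'$ and $g''$ factors with the geometric estimates $|A_n|=O(\gamma_n^{-1})$, $f\ge\gamma_n/2$ on $A_n$, and $\sup_{A_n}|g'(f)|=O(\log(1/\gamma_n))$. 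Your accounting is sharper: the paper's dominant term is $1\cdot\sup_x|f_n-f|$ (from the $f_n+f_n^{-1}$ bound in $I_{n,1}$), which after normalization is only $O(1)$, whereas your bounds give $O(\delta_n)+O(\gamma_n\log(1/\gamma_n))=o(1)$ almost surely. The price is a few extra (correct but not free) facts -- positivity of $f$ on $A_n$ for large $n$, boundedness of $f$ via $f''\in\L^\infty$ and $f\in\L^1$, and the measure bound on $A_n$ -- none of which the paper needs. Both proofs defer the genuinely hard probabilistic content to the cited uniform consistency theorem for dependent data, exactly as you acknowledge.
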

\begin{corollary} \label{cor1}
	If the conditions of Theorem \ref{thm1} hold, then we have
	\begin{align*}
	\lim_{n\to\infty} |S_n(f) - S(f)| = 0
	\end{align*}
	almost surely.
\end{corollary}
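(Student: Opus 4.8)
The plan is to bound $|S_n(f)-S(f)|$ by the triangle inequality, splitting it as
\begin{align*}
|S_n(f) - S(f)| \leq \bigg| S_n(f) - \int_{A_n}\big(-\log f(x)\big)f(x)\,dx\bigg| + \bigg|\int_{A_n}\big(-\log f(x)\big)f(x)\,dx - S(f)\bigg|,
\end{align*}
and to show that each term tends to $0$ almost surely. The first term is handled directly by Theorem \ref{thm1}. I would first observe that the normalizing factor diverges: since $h_n\asymp(n^{-1}\log n)^{1/5}$ gives $n h_n^5/\log n\asymp 1$, and since $\gamma_n\gg h_n$, we have
\begin{align*}
\frac{n\gamma_n^5}{\log n} = \frac{n h_n^5}{\log n}\cdot\Big(\frac{\gamma_n}{h_n}\Big)^5 \asymp \Big(\frac{\gamma_n}{h_n}\Big)^5 \longrightarrow \infty,
\end{align*}
so $(n\gamma_n^5/\log n)^{2/5}\to\infty$. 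Because Theorem \ref{thm1} asserts that this factor multiplied by the first term stays bounded almost surely, the first term itself is $o(1)$ almost surely.

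The second term equals $\big|\int_{A_n^c}\big(-\log f(x)\big)f(x)\,dx\big|$, where $A_n^c=\{x:f_n(x)<\gamma_n\}$, since $S(f)$ has the same integrand integrated over all of $\R$. To control this I would invoke the uniform almost sure convergence of the kernel estimator, $\|f_n-f\|_\infty\to 0$ a.s.\ (available under conditions \textbf{B.1}, \textbf{K.1}, \textbf{K.2}, \textbf{D.1}--\textbf{D.3}), together with its rate. Under these conditions the uniform error is of order $h_n^2$ up to stochastic/logarithmic terms, whence $\|f_n-f\|_\infty = o(\gamma_n)$ almost surely, because $\gamma_n\gg h_n\gg h_n^2$. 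Consequently, on the almost sure event where $\|f_n-f\|_\infty<\gamma_n$ for all large $n$, every $x$ with $f(x)\geq 2\gamma_n$ satisfies $f_n(x)\geq f(x)-\gamma_n\geq\gamma_n$ and so lies in $A_n$; therefore $A_n^c\subseteq\{x:f(x)<2\gamma_n\}$ eventually, giving
\begin{align*}
\bigg|\int_{A_n^c}\big(-\log f(x)\big)f(x)\,dx\bigg| \leq \int_{\{f<2\gamma_n\}} f(x)\,\big|\log f(x)\big|\,dx.
\end{align*}

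Finally, since $\gamma_n\to 0$, the indicators $\mathbf{1}_{\{f<2\gamma_n\}}$ decrease pointwise to $\mathbf{1}_{\{f=0\}}$, while finiteness of $S(f)$ furnishes $f\,|\log f|\in\L^1(\R)$; dominated convergence then yields $\int_{\{f<2\gamma_n\}}f\,|\log f|\,dx\to\int_{\{f=0\}}f\,|\log f|\,dx=0$, using the convention $0\log 0=0$. This drives the second term to $0$ deterministically, and combining the two bounds completes the proof. I expect the main obstacle to be the second term: one must rigorously transfer control of the random set $A_n$, which is defined through $f_n$, to the deterministic low-density set $\{f<2\gamma_n\}$, and this requires pairing the rate in Theorem \ref{thm1} with a uniform almost sure rate for $\|f_n-f\|_\infty$ that is genuinely faster than $\gamma_n$, followed by justifying the limit over the unbounded domain $\R$ via the integrability of $f\log f$.
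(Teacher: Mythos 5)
Your proposal is correct and follows essentially the same route as the paper: the identical triangle-inequality decomposition, Theorem \ref{thm1} for the first term, and for the second term the same key step of using the uniform a.s.\ rate of $f_n-f$ (which beats $\gamma_n$) to trap $A_n^{\mathsf{c}}$ inside a deterministic low-density set $\{f<\delta_n\}$ with $\delta_n\to0$. The only cosmetic difference is at the very end: the paper funnels the conclusion through absolute continuity of the signed measure $E\mapsto-\int_E f\log f\,dx$ with respect to $E \mapsto \int_E f\,dx$ (its Lemma \ref{lemma2}), whereas you apply dominated convergence directly to $\int_{\{f<2\gamma_n\}}f|\log f|\,dx$, which is an equivalent and slightly more self-contained finish.
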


\begin{theorem} \label{thm2}
	Let $\{X_n : n\in\N\}$ be the linear process given in (\ref{lp}), and assume that it has short memory.  Furthermore, assume that $S(f)$ is finite.
	If the bandwidth, kernel, and density conditions listed earlier are satisfied, then
	\begin{align}
	\limsup_{n\to\infty}\;\bigg(\frac{n\;\gamma_n^5}{\log n}\bigg)^\frac{2}{5}\;\;\bigg\lVert\;S_n(f) - \int_{A_n} \big(-\log f(x)\big) f(x)\;dx\;\bigg\rVert_2
	\end{align}
	is bounded whenever the condition $ \gamma_n \gg h_n $ is imposed on the sequence $\{\gamma_n\}_{n=1}^{\infty}$.
\end{theorem}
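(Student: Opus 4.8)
The normalizing factor equals $(\gamma_n/h_n)^2$ up to constants, since \textbf{B.1} gives $(n/\log n)^{2/5}\asymp h_n^{-2}$ and hence $\bigl(n\gamma_n^5/\log n\bigr)^{2/5}=\gamma_n^2(n/\log n)^{2/5}\asymp(\gamma_n/h_n)^2$. Proving the theorem therefore reduces to showing $\lVert D_n\rVert_2=O\bigl((h_n/\gamma_n)^2\bigr)$, where $D_n:=S_n(f)-\int_{A_n}(-\log f)f\,dx$. I would begin exactly as in the proof of Theorem \ref{thm1}: writing $\phi(t)=t\log t$, a second-order Taylor expansion on the set $A_n$ (where $f_n\ge\gamma_n$) gives
\[
D_n=-\int_{A_n}(1+\log f)(f_n-f)\,dx-\int_{A_n}\frac{(f_n-f)^2}{2\xi_n}\,dx=:-M_n-R_n ,
\]
with $\xi_n$ lying between $f$ and $f_n$. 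The task is then to bound $\lVert M_n\rVert_2$ and $\lVert R_n\rVert_2$ separately; in contrast to Theorem \ref{thm1}, every pathwise uniform estimate must now be replaced by a moment estimate in $\L^2(\Omega)$.

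The random domain $A_n$ is the first thing to tame. On the good event $\Omega_n:=\{\sup_x|f_n(x)-f(x)|\le\gamma_n/2\}$ one has the deterministic sandwich $\{f\ge2\gamma_n\}\subseteq A_n\subseteq J_n:=\{f\ge\gamma_n/2\}$, and on $A_n$ both $f$ and $f_n$, hence $\xi_n$, are $\ge\gamma_n/2$. Because $\gamma_n\gg h_n$ while the kernel estimator of a short-memory linear process concentrates at the rate $\sup_x|f_n-f|=O(h_n^2)\ll\gamma_n$, the appendix deviation bounds make $\P(\Omega_n^c)$ negligibly small. A crude deterministic bound $|D_n|=O(n^{c})$ on $\Omega_n^c$ (from $\mathrm{Leb}(A_n)\le\gamma_n^{-1}$ together with the boundedness of $f_n$ and the finiteness of $\int f|\log f|$) then combines with $\P(\Omega_n^c)$ to render $\E[D_n^2\mathbf 1_{\Omega_n^c}]$ negligible, so it suffices to work on $\Omega_n$ with the fixed domain $J_n$.

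For the remainder, $\xi_n\ge\gamma_n/2$ on $A_n\subseteq J_n$ gives $|R_n|\le\gamma_n^{-1}\int_{J_n}(f_n-f)^2\,dx$; applying Cauchy--Schwarz with $\mathrm{Leb}(J_n)\le2/\gamma_n$ and then taking expectations reduces everything to the pointwise fourth-moment estimate $\E[(f_n(x)-f(x))^4]=O(h_n^8)$, which I would import from the appendix (it follows from the $O(h_n^2)$ bias and a fourth-moment computation for the kernel sum under short-range dependence). This yields $\lVert R_n\rVert_2=O(h_n^4/\gamma_n^2)=o\bigl((h_n/\gamma_n)^2\bigr)$, so the remainder is negligible relative to the target rate.

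The main term $M_n=\int_{J_n}(1+\log f)(f_n-f)\,dx$ (plus a boundary contribution over $J_n\setminus A_n$ bounded crudely and shown negligible) splits into a deterministic bias and a centered stochastic part. Writing $w=1+\log f$, the bias is $\int_{J_n}w\,(\E f_n-f)\,dx$, which by \textbf{K.2} and \textbf{D.3} is $O\bigl(h_n^2\int_{J_n}|w|\,dx\bigr)=O\bigl(h_n^2\gamma_n^{-1}\log(1/\gamma_n)\bigr)=o\bigl((h_n/\gamma_n)^2\bigr)$. The stochastic part is the normalized sum $n^{-1}\sum_{i=1}^n\{g_n(X_i)-\E g_n(X_i)\}$ with $g_n(y)=\int w(y+h_nu)\mathbf 1_{J_n}(y+h_nu)K(u)\,du$; its variance is $n^{-2}\sum_{i,j}\mathrm{Cov}\bigl(g_n(X_i),g_n(X_j)\bigr)=O(1/n)$ up to polylogarithmic factors in $\gamma_n^{-1}$, precisely because the short-memory assumption makes these covariances summable. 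Since $n^{-1/2}=o\bigl((h_n/\gamma_n)^2\bigr)$ for every admissible $\gamma_n$, the stochastic part is negligible as well, and collecting the three estimates gives $\lVert D_n\rVert_2=O\bigl(h_n^2\gamma_n^{-1}\log(1/\gamma_n)\bigr)=o\bigl((h_n/\gamma_n)^2\bigr)$, which is more than enough for the claimed boundedness. The main obstacle is the stochastic part: establishing the $O(1/n)$ variance bound for the functional $n^{-1}\sum g_n(X_i)$ of the linear process---together with the pointwise fourth-moment bound for $f_n-f$ and the deviation bound controlling $\Omega_n^c$---is exactly where the short-range dependence and the appendix covariance inequalities must replace the independence that makes these steps routine for i.i.d.\ data.
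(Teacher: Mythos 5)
Your strategy is a genuinely different (and much heavier) route than the paper's, and as written it has real gaps: the key probabilistic inputs you invoke do not exist in the paper and are not routine for dependent data. Specifically, you propose to ``import from the appendix'' a pointwise fourth-moment bound $\E[(f_n(x)-f(x))^4]=O(h_n^8)$, an $O(1/n)$ variance bound for $n^{-1}\sum_i g_n(X_i)$ via summable covariances, and a deviation bound making $\P(\Omega_n^c)$ polynomially small. The appendix contains none of these; it contains only the uniform \emph{almost sure} consistency rate of Wu et al.\ (Lemma \ref{lemma1}), and an a.s.\ rate does not yield moment or tail bounds without substantial extra argument. Establishing those three estimates for a short-memory linear process is precisely the hard part of your plan, and you acknowledge as much in your last sentence without carrying it out --- so the proof is not complete. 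A secondary issue: the Taylor expansion of $t\log t$ requires the intermediate point $\xi_n$ to stay bounded below on $A_n$, which you get only on $\Omega_n$, so the whole main-term analysis is conditional on an event whose complement you have not actually controlled.

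For comparison, the paper's proof of Theorem \ref{thm2} is essentially a rerun of Theorem \ref{thm1}: it splits the error as $K_{n,1}+K_{n,2}$ using the intermediate quantity $\int_{A_n} f_n\log f\,dx$, applies the same elementary logarithm inequalities ($|\log z|\le z+z^{-1}$ and $|\log z|\le|z-1|+|z^{-1}-1|$) pointwise on $A_n$ where $f_n\ge\gamma_n$, bounds both pieces by $C\gamma_n^{-2}\sup_x|f_n-f|$, and then inserts the uniform rate $O((\log n/n)^{2/5})$ from Lemma \ref{lemma1} inside the $\L^2(\Omega)$ norm; no new moment computations, fourth moments, covariance sums, or good/bad event splittings are needed. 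If you want to salvage your approach, the honest path is to prove the fourth-moment and covariance bounds for the linear process from scratch (e.g.\ via the projection/martingale-difference decomposition used by Wu et al.), but that is a separate piece of work, not a citation.
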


\begin{corollary} \label{cor2}
	If the conditions of Theorem \ref{thm2} hold, then the mean squared error (MSE) satisfies 
	\begin{align*}
	\lim_{n\to\infty} \textup{MSE} (S_n(f)) = 0.
	\end{align*}
\end{corollary}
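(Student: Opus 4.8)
The plan is to deduce the mean squared error bound from the $\L^2$ estimate in Theorem \ref{thm2} by isolating the truncation error coming from the random set $A_n$. I will first write the total error as
\[
S_n(f) - S(f) = B_n + D_n,
\]
where $B_n = S_n(f) - \int_{A_n}\big(-\log f\big)f\,dx$ is precisely the quantity whose $\L^2(\Omega)$ norm is controlled in Theorem \ref{thm2}, and
\[
D_n = \int_{A_n}\big(-\log f\big)f\,dx - S(f) = -\int_{\R\setminus A_n}\big(-\log f(x)\big)f(x)\,dx
\]
is the error introduced by integrating over $A_n$ rather than all of $\R$. Since $\textup{MSE}(S_n(f)) = \lVert S_n(f)-S(f)\rVert_2^2$, the triangle inequality in $\L^2(\Omega)$ yields $\textup{MSE}(S_n(f)) \le \big(\lVert B_n\rVert_2 + \lVert D_n\rVert_2\big)^2$, so it is enough to prove $\lVert B_n\rVert_2 \to 0$ and $\lVert D_n\rVert_2 \to 0$.

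The first convergence is immediate from Theorem \ref{thm2}, which gives $\lVert B_n\rVert_2 = O\big((\log n/(n\gamma_n^5))^{2/5}\big)$. Condition \textbf{B.1} forces $n h_n^5/\log n \asymp 1$, and the hypothesis $\gamma_n \gg h_n$ gives $\gamma_n^5/h_n^5 \to \infty$; multiplying, $n\gamma_n^5/\log n \to \infty$, so $\lVert B_n\rVert_2 \to 0$.

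For the truncation term I will establish two facts. First, $D_n$ is bounded by a deterministic constant: finiteness of $S(f)$ together with the boundedness of $f$ gives $M := \int_\R f(x)|\log f(x)|\,dx < \infty$, and hence $|D_n| \le \int_{\R\setminus A_n} f|\log f|\,dx \le M$ for every $n$. Second, $D_n \to 0$ almost surely, exactly as in the proof of Corollary \ref{cor1}: under \textbf{B.1} and $\gamma_n \gg h_n$ the kernel estimator obeys the uniform almost sure rate $\sup_x|f_n(x)-f(x)| = O\big((\log n/n)^{2/5}\big) = o(\gamma_n)$, so any $x$ with $f(x)\ge 2\gamma_n$ satisfies $f_n(x)\ge \gamma_n$, i.e. $x\in A_n$, once $n$ is large. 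Thus eventually $\R\setminus A_n \subseteq \{x : f(x) < 2\gamma_n\}$ almost surely, whence
\[
|D_n| \le \int_{\{f<2\gamma_n\}} f|\log f|\,dx,
\]
and the right-hand side tends to $0$ by dominated convergence (dominating function $f|\log f|\in\L^1$, pointwise limit $0$ since $\gamma_n\to0$). This gives $D_n\to0$ a.s.; combined with the uniform bound $|D_n|\le M$, the bounded convergence theorem yields $\E[D_n^2]\to0$, that is $\lVert D_n\rVert_2\to0$. Putting the two pieces together gives $\textup{MSE}(S_n(f))\to0$.

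The main obstacle is the control of $D_n$, since $A_n$ is a random set and $-\log f$ blows up where $f$ is small. The key device is to trade the random defect event $\{f_n<\gamma_n\}$ for the deterministic event $\{f<2\gamma_n\}$ using uniform almost sure control of $f_n - f$, and then to exploit the deterministic bound $|D_n|\le M$ to upgrade the resulting almost sure convergence to convergence in $\L^2$ via bounded convergence.
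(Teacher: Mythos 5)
Your proof is correct and follows essentially the same route as the paper: the same decomposition into the term controlled by Theorem \ref{thm2} and the truncation term over $A_n^{\mathsf{c}}$, with the truncation term killed in $\L^2$ by dominated/bounded convergence using $\int_\R f(x)\,|\log f(x)|\,dx<\infty$. The only minor difference is that you re-derive the almost sure convergence of the truncation term directly, by showing $A_n^{\mathsf{c}}\subseteq\{x: f(x)<2\gamma_n\}$ eventually and applying dominated convergence over these deterministic sets, whereas the paper invokes the absolute-continuity argument (Lemma \ref{lemma2}) from its proof of Corollary \ref{cor1}; both are valid.
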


\begin{remark}
In this paper we work on the entropy estimation for short memory linear processes by applying the integral method. It is interesting to know whether the similar results hold for long memory linear processes. It is also interesting to know whether the resubstitution method works for dependent data such as linear processes. However, the research in these directions are beyond the scope of this paper. We leave the research in these directions for future work. 
\end{remark}

\begin{remark}
In a wide range of disciplines, including finance, geology, and engineering, many time series may be modeled using a linear process. In such instances, our result provides a method for estimating the associated Shannon Entropy.  One example is the discriminatory data on the arrival phases of earthquakes and explosions, which was captured at a seismic recording station.  Another example is the data about returns on the New York Stock Exchange.  See these and many other time series data in the book by Shumway and Stoffer \cite{Shumway} and other books on time series. 
\end{remark}


\section{Proofs}\label{proof}

\begin{lemma} \label{lemma1}
	If the conditions of Theorem \ref{thm1} (or \ref{thm2}) hold, then
	\begin{align} \label{lbl2}
	\sup_{x\in\R} \big| f_n(x)\; - f(x) \big| = O \bigg( \bigg( \frac{\log n}{n} \bigg) ^ \frac{2}{5} \; \bigg)
	\end{align}	
	almost surely.
\end{lemma}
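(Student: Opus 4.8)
The plan is to control $\sup_{x\in\R}|f_n(x)-f(x)|$ through the standard bias--variance split
\[
f_n(x)-f(x)=\big(f_n(x)-\E f_n(x)\big)+\big(\E f_n(x)-f(x)\big),
\]
and to show that each piece is $O\big((\log n/n)^{2/5}\big)$ uniformly in $x$, almost surely. Under B.1 we have $h_n^2\asymp(n^{-1}\log n)^{2/5}$, so the target rate is precisely $h_n^2$, and the arithmetic $\sqrt{\log n/(nh_n)}=(n^{-1}\log n)^{2/5}$ shows that the usual stochastic scale of a kernel estimator matches the same order. The bias is the easy half. Writing $\E f_n(x)=\int K(u)f(x-h_nu)\,du$ and Taylor expanding $f(x-h_nu)$ to second order, the constant term reproduces $f(x)$ because $\int K=1$, the linear term vanishes by K.2, and the remainder is bounded by $\tfrac12\|f''\|_\infty h_n^2\int u^2K(u)\,du$. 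Here D.3 furnishes $\|f''\|_\infty<\infty$ and the bounded support in K.1 makes $\int u^2K(u)\,du$ finite, so $\sup_x|\E f_n(x)-f(x)|=O(h_n^2)$ as required. Note that this step uses no dependence structure whatsoever, since it involves only the one-dimensional marginal $f$.

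The stochastic term is where the work lies, and I would handle it by discretization followed by a concentration bound. Because $K$ is bounded and H\"{o}lder continuous of order $\iota$ with bounded support (K.1), for $x,x'$ one obtains an increment bound of the shape
\[
\big|f_n(x)-f_n(x')\big|\le \frac{C}{h_n^{1+\iota}}\,|x-x'|^{\iota},
\]
with the analogous estimate after taking expectations. This lets me replace the supremum over all of $\R$ by a maximum over a finite grid $\{x_j\}$ of mesh $\delta_n$: choosing $\delta_n$ a suitable power of $n$ keeps the oscillation between grid points below the target rate, while the bounded support of $K$ together with the boundedness and tail decay of $f$ confine the relevant $x$ to an interval $[-T_n,T_n]$ with $T_n$ polynomial in $n$, so the grid has only polynomially many points. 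The tails $|x|>T_n$ are disposed of separately, since there both $f(x)$ and $\E f_n(x)$ are negligible and $f_n(x)$ vanishes off the $O(h_n)$-neighborhood of the sample.

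It remains to bound $\P\big(|f_n(x_j)-\E f_n(x_j)|>c\,(n^{-1}\log n)^{2/5}\big)$ at each fixed grid point and to sum over $j$ via Borel--Cantelli; this is the main obstacle, and it is precisely where the short-memory assumption enters. The summand $\tfrac{1}{nh_n}\big[K((x_j-X_i)/h_n)-\E K((x_j-X_i)/h_n)\big]$ is a bounded functional of the linear process, and because $\sum_i|a_i|<\infty$ the covariances are summable, giving variance of order $(nh_n)^{-1}$, i.e. the correct scale. To turn this into an almost sure statement I would invoke an exponential inequality valid under this dependence---either one of the foundational concentration estimates for linear processes collected in the appendix, or, failing a direct bound, an $m(n)$-dependent approximation $X_n^{(m)}=\sum_{i=0}^{m}a_i\varepsilon_{n-i}$ with the truncation error controlled by $\sum_{i>m}|a_i|$ and a Bernstein inequality applied to the resulting $m$-dependent blocks. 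The point of the $(\log n/n)^{2/5}$ threshold is that, with the variance scaling as $(nh_n)^{-1}$, the deviation $c\sqrt{\log n/(nh_n)}$ produces a tail of the form $n^{-\theta}$ whose exponent $\theta$ can be driven above any fixed power by enlarging $c$; this defeats the polynomial number of grid points and makes the Borel--Cantelli series converge. Verifying that the available concentration inequality is sharp enough to survive this union bound under the linear-process dependence is the step I expect to be most delicate.
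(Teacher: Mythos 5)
Your overall architecture is the same as the paper's: the bias--variance split, the second-order Taylor expansion of $\E f_n(x)=\int K(u)f(x-h_nu)\,du$ using \textbf{K.2} and \textbf{D.3} to get $\sup_x|\E f_n(x)-f(x)|=O(h_n^2)$, and the observation that under \textbf{B.1} both $h_n^2$ and $\sqrt{\log n/(nh_n)}$ equal $(n^{-1}\log n)^{2/5}$ in order. That half of your argument is correct and is exactly what appears in the paper's appendix. The difference is in the stochastic term: the paper does not prove the uniform almost sure bound $\sup_x|f_n(x)-\E f_n(x)|=O\big(\sqrt{\log n/(nh_n)}+n^{-1/2}l(n)\big)$ at all --- it invokes Theorem 2 of Wu, Huang and Huang (2010), after verifying that the short-memory linear process satisfies that theorem's hypotheses (the marginal condition, $X_n\in\L^2$, $\log n=o(nh_n)$, and the predictive dependence bound $\Theta(n)+\Psi(n)=O(n)$). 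You instead attempt to prove this from scratch, and that is where your proposal has a genuine gap.

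Concretely, the step you yourself flag as ``most delicate'' is the entire content of the lemma, and the route you sketch does not obviously close. An exponential inequality for $\frac{1}{nh_n}\sum_i\big[K((x-X_i)/h_n)-\E K((x-X_i)/h_n)\big]$ with variance proxy of order $(nh_n)^{-1}$ is not available off the shelf for linear processes; the Wu--Huang--Huang proof goes through a martingale decomposition with respect to the filtration $\mathcal{F}_i=\sigma(\varepsilon_i,\varepsilon_{i-1},\dots)$ and the coupling measures $\theta(k)$, $\psi(k)$, not through a union bound over a grid with a Bernstein-type tail. Your fallback of an $m$-dependent truncation $X_i^{(m)}=\sum_{j\le m}a_j\varepsilon_{i-j}$ plus blockwise Bernstein also runs into a quantitative obstruction: short memory only gives $\sum_j|a_j|<\infty$ with no rate on the tail sum $\sum_{j>m}|a_j|$, while the truncation error enters the kernel sum through $|X_i-X_i^{(m)}|^{\iota}/h_n^{\iota}$ and must be driven below $(n^{-1}\log n)^{2/5}$ uniformly in $x$; without an assumed decay rate on the coefficients you cannot choose $m(n)$ to make both the truncation error and the block length work simultaneously. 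So the proposal, as written, leaves the key concentration estimate unestablished, and the honest fix is to do what the paper does: cite the uniform consistency theorem for kernel estimators of time series and check its hypotheses for the short-memory linear process.
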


\begin{proof}
	This lemma follows from Theorem 2 of Wu et al. \cite{WHH} (see their discussion immediately after the statement of Theorem 2 and in the penultimate paragraph of section 4.1)  Also, see the discussion in the appendix on fundamental results.
\end{proof}


\begin{lemma} \label{lemma3}
	If the conditions of Theorem \ref{thm1} (or \ref{thm2}) hold, then
	\begin{align} \label{lbl30}
	\gamma_n^5 \gg \frac{\log n}{n}.
	\end{align}	
\end{lemma}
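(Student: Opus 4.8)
The plan is to derive the claimed lower bound on $\gamma_n^5$ by simply chaining the bandwidth condition \textbf{B.1} with the imposed hypothesis $\gamma_n \gg h_n$. First I would translate \textbf{B.1} into a statement about fifth powers: since $h_n \asymp (n^{-1}\log n)^{1/5}$ and the map $t\mapsto t^5$ is increasing on $(0,\infty)$ and preserves $\asymp$ for positive sequences, raising to the fifth power gives $h_n^5 \asymp n^{-1}\log n = \frac{\log n}{n}$. In particular, $\limsup_{n\to\infty}\frac{\log n/n}{h_n^5} < \infty$.

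Next I would unwind the hypothesis $\gamma_n \gg h_n$. By the notational conventions of the paper, $\gamma_n \gg h_n$ means $h_n \ll \gamma_n$, i.e. $h_n = o(\gamma_n)$, equivalently $\gamma_n/h_n \to \infty$. Since both sequences are positive, taking fifth powers yields $(\gamma_n/h_n)^5 = \gamma_n^5/h_n^5 \to \infty$; that is, $h_n^5 = o(\gamma_n^5)$, or $\gamma_n^5 \gg h_n^5$.

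Finally I would combine the two facts by writing
\[
\frac{\log n/n}{\gamma_n^5} \;=\; \frac{\log n/n}{h_n^5}\cdot\frac{h_n^5}{\gamma_n^5}.
\]
The first factor on the right is bounded above, by the $\asymp$ relation established from \textbf{B.1}, while the second factor tends to $0$ by the preceding paragraph; hence the product tends to $0$. This is precisely the assertion $\frac{\log n}{n} = o(\gamma_n^5)$, i.e. $\gamma_n^5 \gg \frac{\log n}{n}$, as claimed in (\ref{lbl30}).

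I do not anticipate a genuine obstacle here: the result is a direct bookkeeping consequence of the definitions of $\asymp$, $o(\cdot)$, and $\gg$, together with the elementary fact that exponentiation by $5$ preserves both asymptotic equivalence and little-oh relations for positive sequences. The only care needed is to keep the direction of the $\gg$ convention straight — $\gamma_n \gg h_n$ says that $\gamma_n$ dominates $h_n$ — and to confirm that the $\asymp$ relation survives the fifth power so that the first factor above is indeed bounded.
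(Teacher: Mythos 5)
Your argument is correct and is essentially the paper's own proof: both use \textbf{B.1} to control the ratio of $h_n^5$ to $n^{-1}\log n$ and then invoke $\gamma_n/h_n\to\infty$ to conclude that $\gamma_n^5/(n^{-1}\log n)\to\infty$. The only difference is cosmetic (you work with reciprocals and phrase the bound as a $\limsup$, while the paper bounds $h_n^5/(n^{-1}\log n)$ from below by a constant), so nothing further is needed.
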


\begin{proof}
	Because $h_n\asymp (n^{-1}\log n)^\frac{1}{5}$, there exists $C\in\R^+$ such that
	\[ \frac{h_n^5}{n^{-1} \log n} > C \] for sufficiently large $n$.  Therefore,
	\begin{align*}
	\lim_{n\to\infty} \frac{\gamma_n^5}{n^{-1} \log n} 
	& = \lim_{n\to\infty} \frac{\gamma_n^5}{h_n^5} \cdot \frac{h_n^5}{n^{-1} \log n} \\
	& \geq C  \lim_{n \to \infty} \bigg( \frac{\gamma_n}{h_n} \bigg)^5 \to \infty
	\end{align*}
	as $n\to\infty$, from which \ref{lbl30} follows.
\end{proof}

\noindent\textbf{Note.} Our use of Lemma \ref{lemma3} in the proofs of Theorems \ref{thm1} and \ref{thm2} will be tacit.
$\;$\\
\begin{lemma} \label{lemma2}
	If $\nu$ is a finite signed measure that is absolutely continuous with respect to a measure $\mu$, then corresponding to every positive number $\varepsilon$ there is a positive number $\delta$ such that $|\nu|(E)<\varepsilon$ whenever $E$ is a measurable set for which $\mu(E)<\delta$.
\end{lemma}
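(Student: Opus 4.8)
The plan is to argue by contradiction, exploiting the finiteness of $|\nu|$ together with the continuity from above enjoyed by finite measures. First I would reduce to the total variation measure: for a signed measure, absolute continuity $\nu \ll \mu$ is equivalent to $|\nu| \ll \mu$, since by the Hahn--Jordan decomposition a $\mu$-null set is null for both the positive and negative parts of $\nu$. Thus it suffices to prove the claim with the finite \emph{measure} $|\nu|$ in place of $\nu$, and I may freely use $|\nu|(\R) < \infty$.

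Next, suppose the conclusion fails. Then there is some $\varepsilon > 0$ for which no $\delta$ works; in particular, taking $\delta = 2^{-n}$ for each $n \in \N$, I can select a measurable set $E_n$ with $\mu(E_n) < 2^{-n}$ yet $|\nu|(E_n) \geq \varepsilon$. The idea is then to pass to the tail sets $F_n = \bigcup_{k \geq n} E_k$, which form a decreasing sequence, and to examine the limit set $F = \bigcap_{n=1}^\infty F_n = \limsup_n E_n$.

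For the $\mu$-side, countable subadditivity gives $\mu(F_n) \leq \sum_{k \geq n} 2^{-k} = 2^{-(n-1)}$, and since $F \subseteq F_n$ we get $\mu(F) \leq \mu(F_n) \to 0$, whence $\mu(F) = 0$. For the $|\nu|$-side, monotonicity gives $|\nu|(F_n) \geq |\nu|(E_n) \geq \varepsilon$ for every $n$; here finiteness is essential, because $|\nu|(F_1) < \infty$ allows continuity from above for the decreasing sequence $\{F_n\}$, yielding $|\nu|(F) = \lim_n |\nu|(F_n) \geq \varepsilon > 0$. This contradicts $|\nu| \ll \mu$, since $\mu(F) = 0$ forces $|\nu|(F) = 0$, and the contradiction establishes the lemma.

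I expect the only genuine subtlety to lie in the careful use of finiteness: continuity from above requires that some member of the decreasing sequence have finite measure, and this is precisely what the hypothesis of a \emph{finite} signed measure supplies through $|\nu|(\R) < \infty$. The Borel--Cantelli-style construction of the $E_n$ with summable $\mu$-measure is then the routine mechanism that renders the limit set $\mu$-null while keeping its $|\nu|$-measure bounded below by $\varepsilon$.
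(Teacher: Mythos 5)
Your proof is correct and complete. The paper does not write out an argument at all---it simply cites Theorem B of Section 30 in Halmos's \emph{Measure Theory}---and the contradiction argument you give (reduce to $|\nu|$ via the Hahn--Jordan decomposition, extract sets $E_n$ with $\mu(E_n)<2^{-n}$ and $|\nu|(E_n)\geq\varepsilon$, and apply continuity from above of the finite measure $|\nu|$ to the tails $F_n=\bigcup_{k\geq n}E_k$) is precisely the classical proof found in that reference.
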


\begin{proof}
	This is a basic result from measure theory.  See, for example, Theorem B of Halmos \cite{Halmos} in section 30.
\end{proof}

\paragraph{Proof of Theorem \ref{thm1}.} We begin with the decomposition
\begin{align} \label{lbl4}
\begin{split}
S_n(f) -&\int_{A_n} \big(-\log f(x)\big) f(x)\;dx \\
&= -\int_{A_n} f_n(x)\;\log f_n(x)\;dx + \int_{A_n} f(x)\;\log f(x)\;dx \\
&= -\int_{A_n} f_n(x)\;\log f_n(x)\;dx + \int_{A_n} f(x)\;\log f_n(x)\;dx \\
&\ \;\;\; -\int_{A_n} f(x)\;\log f_n(x)\;dx + \int_{A_n} f(x)\;\log f(x)\;dx \\
&= I_{n,1} + I_{n,2},
\end{split}
\end{align}
where
\begin{align*}
I_{n,1} &:= -\int_{A_n} \big[ f_n(x)\; - f(x) \big]\;\log f_n(x)\;dx,
\end{align*}
and
\begin{align*}
I_{n,2} &:= -\int_{A_n} f(x) \; \big[ \log f_n(x)\; - \log f(x) \big]\;dx,
\end{align*}

First, we consider $I_{n,1}$.  Using the inequality
\begin{align*}
|\log z| \leq z + \frac{1}{z}
\end{align*}
for $z\in\R^+$, we notice that for all $x\in A_n$, we have
\begin{align*}
\big|\log f_n(x)\big| 
&\leq f_n(x) + \frac{1}{f_n(x)} \\
&= \bigg( 1 + \frac{1}{(f_n(x))^2} \bigg) f_n(x) \\
&\leq \bigg( 1 + \frac{1}{\gamma_n^2} \bigg) f_n(x).
\end{align*}
It follows that
\begin{align} \label{boundIn1}
\begin{split}
\big| I_{n,1} \big| 
&\leq \sup_{x\in\R} \big| f_n(x)\; - f(x) \big| \int_{A_n} \big|\log f_n(x)\big|\;\;dx \\
&\leq \bigg( 1 + \frac{1}{\gamma_n^2} \bigg) \sup_{x\in\R} \big| f_n(x)\; - f(x) \big|,
\end{split}
\end{align}
since $f_n(x)$ integrates to unity over the real line.

Next, we consider $I_{n,2}$.  Since the set over which we are integrating may be changed to $A_n \cap \{x : f(x) > 0\}$ without affecting the value of $I_{n,2}$, we may assume that $f$ is positive on $A_n$.  Using the inequality
\begin{align*}
\log z \leq |z-1| + |z^{-1}-1|
\end{align*}
for $z\in\R^+$, we notice that for all $x\in A_n$, we have
\begin{align} \label{lblineq}
\begin{split}
\big|\log f_n(x)\; - \log f(x) \big| 
&= \bigg| \ln \bigg( \frac{f_n(x)}{f(x)} \bigg) \bigg| \\
&\leq \bigg| \frac{f_n(x)}{f(x)} -1 \bigg| + \bigg| \frac{f(x)}{f_n(x)} -1 \bigg| \\
&= \bigg| \frac{f_n(x)-f(x)}{f(x)} \bigg| + \bigg| \frac{f(x)-f_n(x)}{f_n(x)} \bigg| \\
&= \bigg(1+\frac{f_n(x)}{f(x)}\bigg)  \bigg| \frac{f_n(x)-f(x)}{f_n(x)} \bigg| \\
&\leq \frac{C}{\gamma_n}  \big| f_n(x)-f(x) \big|,
\end{split}
\end{align}
if we can justify the existence of $C\in\R^+$.  To that end, define
\begin{align*}
\varepsilon_n = \sup_{x\in A_n} \big| f_n(x)\; - f(x) \big|,
\end{align*}
and note that for all $x\in A_n$, we have
\begin{align*}
\bigg| 1 - \frac{f(x)}{f_n(x)} \bigg| \leq \frac{\varepsilon_n}{f_n(x)} \leq \frac{\varepsilon_n}{\gamma_n}.
\end{align*}
Taking the supremem over $A_n$ yields
\begin{align*}
\sup_{x\in A_n} \bigg| 1 - \frac{f(x)}{f_n(x)} \bigg| &\leq \frac{\varepsilon_n}{\gamma_n} = \gamma_n^{-1}\;\varepsilon_n \\
&\leq C \gamma_n^{-1}\bigg/\bigg(\frac{n}{\log n}\bigg)^{\frac{2}{5}},
\end{align*}
by Lemma \ref{lemma1}.  Note that
\begin{align*}
\gamma_n^{-1}=o\bigg( \bigg( \frac{n}{\log n} \bigg)^\frac{2}{5} \bigg),
\end{align*}
since
\begin{align*}
\lim_{n\to\infty} \frac{\gamma_n^{-1}}{ \big(\frac{n}{\log n}\big)^\frac{2}{5}}
&= \lim_{n\to\infty} \frac{ \big(\frac{\log n}{n}\big)^\frac{2}{5}}{\gamma_n} \\
&= \lim_{n\to\infty} \bigg(\frac{ \frac{\log n}{n}}{\gamma_n^5}\;\frac{ \frac{\log n}{n}}{1} \bigg)^\frac{1}{5} \\
&= 0.
\end{align*}
This guarantees the existence we sought to establish.  We continue with
\begin{align} \label{boundIn2}
\begin{split}
\big| I_{n,2}  \big| 
&\leq \frac{C}{\gamma_n}\ \sup_{x\in\R} \big| f_n(x)\; - f(x) \big| \int_{A_n} f(x)\;dx \\
&\leq \frac{C}{\gamma_n}\ \sup_{x\in\R} \big| f_n(x)\; - f(x) \big|,
\end{split}
\end{align}
since $f_n(x)$ integrates to unity over the real line.

In view of (\ref{lbl4}), (\ref{boundIn1}), and (\ref{boundIn2}), we have shown that
\begin{align} \label{lbl1}
\bigg|S_n(f) - &\int_{A_n} \big(-\log f(x)\big) f(x)\;dx \bigg| \\
&\leq \bigg(\frac{1}{\gamma_n^2} + \frac{C}{\gamma_n} + 1 \bigg) \sup_{x\in\R} \big| f_n(x)\; - f(x) \big|.
\end{align}
Therefore,
\begin{align*}
\limsup_{n\to\infty}\;&\bigg(\frac{n\;\gamma_n^5}{\log n}\bigg)^\frac{2}{5}\;\;\bigg|S_n(f) - \int_{A_n} \big(-\log f(x)\big) f(x)\;dx\ \bigg| \\
&\leq \limsup_{n\to\infty}\; \bigg(\frac{n}{\log n}\bigg)^\frac{2}{5}\gamma_n^2 \bigg(\frac{1}{\gamma_n^2} + \frac{C}{\gamma_n} + 1 \bigg) \sup_{x\in\R} \big| f_n(x)\; - f(x) \big| \\
&= \limsup_{n\to\infty}\; \big( \gamma_n^2 + C \gamma_n + 1 \big) \bigg(\frac{n}{\log n}\bigg)^\frac{2}{5} \sup_{x\in\R} \big| f_n(x)\; - f(x) \big|,
\end{align*}
where the last expression is constant almost surely by Lemma \ref{lemma1} and since $\gamma_n\to 0$.
\qed

\paragraph{Proof of Corollary \ref{cor1}.}
By the triangle inequality
\begin{align*}
|S_n(f) - S(f)| \leq J_{n,1} + J_{n,2},
\end{align*}
where
\begin{align*}
J_{n,1} = \bigg|S_n(f) - \int_{A_n} \big(-\log f(x)\big) f(x)\;dx\ \bigg|
\end{align*}
and
\begin{align*}
J_{n,2} = \bigg|\int_{A_n} \big(-\log f(x)\big) f(x)\;dx - S(f) \bigg|.
\end{align*}
Since $J_{n,1}\to 0$ almost surely by Theorem \ref{thm1}, we only need to contend with $J_{n,2}$.  That is, we need to show that
\begin{align} \label{lbl3}
\int_{A_n^\mathsf{c}} f(x) \log f(x)\;dx \to 0
\end{align}
almost surely as $n\to\infty$.

For any Borel measurable set $E$, consider
\begin{align*}
P(E) = \int_E f(x)\;dx,
\end{align*}
and define the signed measure
\begin{align*}
\nu (E) = -\int_E \log f(x)\;dP.
\end{align*}
Since $|S(f)|<\infty$, both $\nu^+$ and $\nu^-$ are finite measures, and thus, $\nu$ is a finite signed measure that is absolutely continuous with respect to $P$.  Because of Lemma \ref{lemma2}, it suffices for us to demonstrate that
\begin{align*}
P(A_n^\mathsf{c}) \to 0
\end{align*}
almost surely.  For any $x\in A_n^\mathsf{c}$, we have $f_n(x) < \gamma_n$.  By Lemma \ref{lemma1}, there exists $C\in\R^+$ such that $f(x) \leq f_n(x) + |f_n(x)-f(x)| < \gamma_n + C \big( \frac{\log n}{n} \big)^{\frac{2}{5}}$ almost surely, and hence, we have shown that $A_n^\mathsf{c}\subseteq B_n$ almost surely, where
\[ B_n := \bigg\{x: f(x) < \gamma_n + C \bigg( \frac{\log n}{n} \bigg)^{\frac{2}{5}} \bigg\}. \]
It is easy to see that
\begin{align*}
0 \leq P(A_n^\mathsf{c}) \leq P(B_n) \to 0
\end{align*}
almost surely, since $\gamma_n + C \big( \frac{\log n}{n} \big)^{\frac{2}{5}} \to 0$ as $n\to\infty $.
\qed

\paragraph{Proof of Theorem \ref{thm2}.}
We start with
\begin{align*}
\bigg\lVert S_n(f)-&\int_{A_n} \big(-\log f(x)\big) f(x)\;dx\ \bigg\rVert_2 \\
&\leq \bigg\lVert S_n(f) + \int_{A_n} f_n(x) \log f(x)\;dx \bigg\rVert_2 \\
&\;\;\;\;\;+ \bigg\lVert \int_{A_n} f_n(x) \log f(x)\;dx - \int_{A_n} f(x) \log f(x)\;dx \bigg\rVert_2 \\
& =: K_{n,1} + K_{n,2}.
\end{align*}
Recall inequality (\ref{lblineq}) in the proof of Theorem \ref{thm1}.  Arguing in a similar manner as before, we can demonstrate the existence of $C_1\in\R^+$ so that
\begin{align*}
K_{n,1} &= \bigg\lVert \int_{A_n} f_n(x)\big[\log f(x) - \log f_n(x)\big]\;dx \bigg\rVert_2 \\
&= \bigg\lVert \int_{A_n} f_n(x) \log \frac{f(x)}{f_n(x)}\;dx \bigg\rVert_2 \\
& \leq \bigg\lVert \int_{A_n} f_n(x) \bigg| \log \frac{f(x)}{f_n(x)}\bigg|\;dx \bigg\rVert_2 \\
& \leq \bigg\lVert \int_{A_n} \frac{C_1}{\gamma_n} |f_n(x)-f(x)| f_n(x) \;dx \bigg\rVert_2\\
& \leq \frac{C_1}{\gamma_n} \; \bigg( \frac{\log n}{n} \bigg)^\frac{2}{5} \bigg\lVert  \int_{x\in A_n}\;f_n(x)\;dx \bigg\rVert_2\\
& \leq \frac{C_1}{\gamma_n} \; \bigg( \frac{\log n}{n} \bigg)^\frac{2}{5}.
\end{align*}
Also, notice that
\begin{align*}
K_{n,2} &= \bigg\lVert \int_{A_n} [f_n(x) - f(x)] \log f(x)\;dx \bigg\rVert_2 \\
&\leq \bigg\lVert \int_{A_n} \big|f_n(x)-f(x)\big| \log f(x)\;dx\bigg\rVert_2 \\
&\leq C_2 \bigg(\frac{\log n}{n}\bigg)^\frac{2}{5} \bigg\lVert \int_{A_n} \bigg( f(x)+\frac{1}{f(x)} \bigg) \;dx\bigg\rVert_2\\
&\leq C_2 \bigg(\frac{\log n}{n}\bigg)^\frac{2}{5} \bigg[1+ \bigg\lVert \int_{A_n} \frac{f_n(x)}{f(x)}\frac{1}{f_n^2(x)}f_n(x)\;dx\bigg\rVert_2\bigg]\\
&\leq C_2 \bigg(\frac{\log n}{n}\bigg)^\frac{2}{5} \bigg(1+ \frac{C_3}{\gamma_n^2}\bigg).
\end{align*}
Therefore,
\begin{align*}
\bigg(\frac{n\;\gamma_n^5}{\log n}\bigg)^\frac{2}{5}&\bigg\lVert S_n(f) - \int_{A_n} \big(-\log f(x)\big) f(x)\;dx\ \bigg\rVert_2 \\
& \leq \bigg(\frac{n}{\log n}\bigg)^\frac{2}{5}\gamma_n^2\;\bigg[\frac{C_1}{\gamma_n}\bigg(\frac{\log n}{n}\bigg)^\frac{2}{5} + C_2 \bigg(\frac{\log n}{n}\bigg)^\frac{2}{5}\bigg(1+\frac{C_3}{\gamma_n^2}\bigg)\bigg] \\
&= C_1 \gamma_n + C_2 \gamma_n^2 + C_2 C_3,
\end{align*}
from which the result follows.
\qed

\paragraph{Proof of Corollary \ref{cor2}.}
Note the decomposition
\begin{align*}
\sqrt{\textup{MSE}(S_n(f))}&=\big\lVert S_n(f)-S(f) \big\rVert_2 \\
&\leq \bigg\lVert S_n(f) - \int_{A_n} \big(-\log f(x)\big) f(x)\;dx\ \bigg\rVert_2 \\
&\;+  \bigg\lVert \int_{A_n} \big(-\log f(x)\big) f(x)\;dx - S(f) \bigg\rVert_2 \\
&=: M_{n,1} + M_{n,2}.
\end{align*}
By Theorem \ref{thm2}, $M_{n,1}\to 0$.  Now, let
\begin{align*}
W_n = \int_{x\in\R} \int_{y\in\R} f(x) f(y) \log f(x) \log f(y)\;\I\big((x,y)\in A_n^\mathsf{c}\times A_n^\mathsf{c}\big)\;dx\;dy
\end{align*}
and
\begin{align*}
W = \int_{x\in\R} \int_{y\in\R} f(x) f(y) \big| \log f(x) \log f(y) \big| \;dx\;dy.
\end{align*}
Recall from (\ref{lbl3}) in the proof of Corollary \ref{cor1} that $W_n \to 0$ almost surely.  Because $|S(f)|<\infty$, it follows that
$W<\infty$, and moreover, $|W_n| \leq W$. Hence,
\begin{align}
\begin{split}
M_{n,2}^2&= \bigg\lVert  \int_{A_n^{\mathsf{c}}} f(x) \log f(x) \;dx\ \bigg\rVert_2^2 \\
&= \bigg\lVert \int_{\R} f(x) \log f(x)\;\I(x\in A_n^\mathsf{c})\;dx \bigg\rVert_2^2 \\
&= \E [W_n],
\end{split}
\end{align}
and the Lebesgue Dominated Convergence Theorem guarantees that 
\begin{align*}
\lim_{n\to\infty} M_{n,2}^2 = \lim_{n\to\infty} \E [W_n] = \E \bigg[\lim_{n\to\infty} W_n \bigg] = \E [0] = 0,
\end{align*}
thereby proving the corollary.
\qed

\section{Appendix}

In the paper \cite{WHH}, Wu et al. establish results that are very useful in the proof section.  Here, we briefly survey their definitions and results which show that the kernel density estimator for one-sided linear processes enjoys properties similar to the independent case---see Stute \cite{Stute}. Their work identifies conditions under which the kernel density estimator enjoys strong uniform consistency for a wide class of time series.  Included is the linear process in (\ref{lp}).

As is common in analysis of time series, we allude to an independent and identically distributed collection $\{\varepsilon_i : i\in\Z\}$ of random variables, typically referred to as the innovations.  Note that many time series models fit the form
\begin{align} \label{J_filter}
X_n = J(\cdots,\varepsilon_{n-1},\varepsilon_{n}),
\end{align}
which regards the $X_n$ as a system dependent on the innovations.  Note here that $J$ is some measurable function which is referred to as the filter.  In this context, we also need to define the sigma algebras
\begin{align*}
\mathcal{F}_n=\sigma\{\varepsilon_{n}, \varepsilon_{n-1}, \cdots\},
\end{align*}
where $n\in\Z$.  In addition. let $\varepsilon_{0}'$ be an independent and identical copy of $\varepsilon_{0}$ which is, of course, independent of all the $\varepsilon_{i}$.  For $n\geq0$, define 
\[
\mathcal{F}_n^*=\sigma\{\varepsilon_{n}, \varepsilon_{n-1},\cdots,\varepsilon_{1}, \varepsilon_{0}', \varepsilon_{-1},\cdots \},
\]
and for $n<0$, put $\mathcal{F}_n^*=\mathcal{F}_n$.

Define the $l$-step ahead conditional distribution by
\[
F_l(x|\mathcal{F}_k)=P(X_{l+k} \leq x | \mathcal{F}_k),
\]
where $l\in\N$ and $k\in\Z$.  When it exists, the $l$-step ahead conditional density is
\[
f_l(x|\mathcal{F}_k)=\frac{d}{dx} F_l(x|\mathcal{F}_k).
\]
As Wu et al. \cite{WHH} notes, a sufficient condition for the existence of a marginal density of (\ref{J_filter}) is that $f_1(x|\F_0)$ exists and is uniformly bounded almost surely by some $M\in\R^+$.  We shall refer to this as the \textbf{marginal condition}.  Similarly, $F_l(x|\cF_k^*)=P(X_{l+k}^*\leq x|\cF_k^*)$, where $X_{l+k}^*=X_{l+k}-a_{l+k}\varepsilon_{0} + a_{l+k}\varepsilon'_{0}$ if $l+k\geq 0$ and $X_{l+k}^*=X_{l+k}$ if $l+k<0$.  Also, $f_l(x|\cF^*)=\frac{d}{dx}F_l(x|\cF_k^*)$.

With this setup, the authors introduce the following measures of the dependence present in the system (\ref{J_filter}).  Now, for $k\geq 0$, define a pointwise measure of difference by
\begin{align*}
\theta_k(x)=\lVert f_{1+k}(x|\mathcal{F}_0)-f_{1+k}(x|\mathcal{F}_0^*)\rVert_2
\end{align*}
and an $\L^2$-integral measure of difference over $\R$ by
\begin{align*}
\theta(k)=\bigg[\int_\R\theta_k^2(x)\;dx\bigg]^\frac{1}{2}.
\end{align*}
Finally, define an overall measure of difference by
\begin{align*}
\Theta(n)=\sum_{j\in\Z} \bigg(\sum_{k=1-j}^{n-j}\big|\theta(k)\big|\bigg)^2.
\end{align*}
The distances on the derivatives are defined similarly, as given below.
\begin{align*}
\psi_k(x)&=\lVert f_{1+k}'(x|\mathcal{F}_0)-f_{1+k}'(x|\mathcal{F}_0^*)\rVert_2, \\
\psi(k)&=\bigg[\int_\R\psi_k^2(x)\;dx\bigg]^\frac{1}{2}, \;\;\;\text{ and}\\
\Psi(n)&=\sum_{j\in\Z} \bigg(\sum_{k=1-j}^{n-j}\big|\psi(k)\big|\bigg)^2.
\end{align*}

\noindent With this setup, we can now report the following result of Wu et al. \cite[Theorem 2]{WHH}.

\begin{theorem}
	Assume that, for some positive $r$ and $s$, we have that $K\in\mathcal{C}^r$ is a bounded function with bounded support and that $X_n\in\L^s$.  Further, assume the marginal condition, and assume that $ \Theta(n) + \Psi(n)=O(n^\alpha\; \tilde{l}(n))$, where $\alpha \geq 1$ and where $\tilde{l}$ is a slowly varying function.  If  $\log n=o(n h_n)$, then
	\begin{align*}
	\sup_{x\in\R} \big|f_n(x)-\E f_n(x)\big|=O\bigg(\sqrt{\frac{\log n}{n h_n}} + n^{-\frac{1}{2}}\;l(n)\bigg),
	\end{align*}
	where $l(n)$ is another slowly-varying function.
\end{theorem}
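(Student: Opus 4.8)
The plan is to bound the centered estimator as a normalized sum of dependent terms and to separate a ``local'' fluctuation part, handled by chaining and a concentration inequality, from a ``dependence'' remainder controlled by the measures $\Theta$ and $\Psi$. Write
\[ f_n(x)-\E f_n(x)=\frac{1}{nh_n}\sum_{i=1}^n Z_i(x), \qquad Z_i(x):=K\Big(\tfrac{x-X_i}{h_n}\Big)-\E K\Big(\tfrac{x-X_i}{h_n}\Big). \]
First I would truncate in $x$. Since $X_n\in\L^s$, Markov's inequality gives $\P(|X_i|>b_n)\le \E|X_i|^s\,b_n^{-s}$, so for $b_n$ growing polynomially the event $\{\max_{i\le n}|X_i|\le b_n\}$ holds for all large $n$ almost surely; because $K$ has bounded support, both $f_n(x)$ and $\E f_n(x)$ then vanish (up to a negligible term) for $|x|>b_n+Ch_n$. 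It therefore suffices to control the supremum over the compact window $[-b_n,b_n]$.

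Next I would discretize that window. Cover $[-b_n,b_n]$ by $N_n\asymp b_n/\delta_n$ points $x_1,\dots,x_{N_n}$. The hypothesis $K\in\mathcal{C}^r$ controls oscillation between neighbouring grid points: for $|x-x'|\le\delta_n$ one has $|Z_i(x)-Z_i(x')|\le 2C_K(\delta_n/h_n)^r$, so $\sup_{|x-x'|\le\delta_n}|f_n(x)-f_n(x')|\le 2C_K h_n^{-1}(\delta_n/h_n)^r$, and the same bound holds for the expectations. Choosing $\delta_n$ polynomially small makes this oscillation of smaller order than $\sqrt{\log n/(nh_n)}$, reducing the task to the finite maximum $\max_{j\le N_n}|f_n(x_j)-\E f_n(x_j)|$.

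The heart of the argument, and what I expect to be the main obstacle, is a sharp per-point deviation bound for dependent data. For fixed $x=x_j$ I would pass to a martingale using the projections $P_k\cdot=\E[\cdot\,|\,\mathcal{F}_k]-\E[\cdot\,|\,\mathcal{F}_{k-1}]$, writing $\sum_{i=1}^n Z_i(x)=\sum_k D_k$ with $D_k=\sum_{i\ge k}P_k Z_i(x)$ a sequence of martingale differences. The entire purpose of the dependence measures is to bound these increments: by the coupling (replacing $\varepsilon_0$ by $\varepsilon_0'$) that defines $\theta_k$ and $\psi_k$, the norm $\|P_kZ_i(x)\|_2$ is governed by differences of the conditional densities, and after convolving against $K$---directly, and once more after an integration by parts that moves the derivative onto the conditional density, which is precisely why the derivative measure $\Psi$ must appear alongside $\Theta$---one obtains $\sum_k\|D_k\|_2^2$ in terms of $\Theta(n)$ and $\Psi(n)$. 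Because $K$ is bounded with bounded support, the summands $Z_i(x)$ are bounded and, under the short-range summability encoded in the hypothesis, the conditional variance of $(nh_n)^{-1}\sum_i Z_i(x)$ scales like $(nh_n)^{-1}$; obtaining this scaling uniformly, rather than something larger, is the delicate point. Feeding the variance proxy and the bounded increments into a Freedman-type martingale exponential inequality then yields, for small $\lambda$,
\[ \P\Big(\big|f_n(x_j)-\E f_n(x_j)\big|>\lambda\Big)\le 2\exp\!\big(-c\,n h_n\,\lambda^2\big). \]

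Finally I would assemble the pieces. Taking $\lambda=\lambda_n\asymp\sqrt{\log n/(nh_n)}$, so that $nh_n\lambda_n^2\asymp\log n$, and applying a union bound over the polynomially many grid points makes $N_n\,\P(\,\cdot>\lambda_n)$ summable in $n$; by Borel--Cantelli, $\max_{j\le N_n}|f_n(x_j)-\E f_n(x_j)|=O(\sqrt{\log n/(nh_n)})$ almost surely, which together with the oscillation estimate of the second step gives the first term of the claimed rate. The second term is the dependence remainder: the part of the sum not absorbed into the local martingale variance is bounded in $\L^2$ by a multiple of $\sqrt{\Theta(n)+\Psi(n)}$, and since the factor $h_n$ produced by convolving $K$ against the conditional densities cancels the $h_n^{-1}$ in the normalization, after dividing by $n$ it is $O\big(\sqrt{\Theta(n)+\Psi(n)}/n\big)=O(n^{\alpha/2-1}\sqrt{\tilde l(n)})$, which in the short-range regime $\alpha=1$ relevant to this paper is exactly $n^{-1/2}l(n)$ with $l(n):=\sqrt{\tilde l(n)}$. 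Adding the contributions gives the stated bound, and the hypothesis $\log n=o(nh_n)$ ensures $\lambda_n\to0$, so the estimator is consistent.
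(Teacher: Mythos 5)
First, a point of orientation: the paper does not actually prove this statement. It is quoted verbatim from Wu, Huang and Huang \cite{WHH} (their Theorem 2), and the appendix only verifies that its hypotheses hold for the short-memory linear process (\ref{lp}); there is no in-paper argument to compare yours against. Measured against the proof in the cited reference, your sketch reproduces the correct high-level architecture---spatial truncation via the moment condition $X_n\in\L^s$, discretization of the compact window using the H\"older continuity of $K$, a projection/martingale decomposition whose increments are controlled by the predictive dependence measures, an exponential inequality, and Borel--Cantelli---so the strategy is the right one.

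As a proof, however, the proposal has genuine gaps exactly at the steps you yourself flag as delicate. The claim that the conditional variance of $(nh_n)^{-1}\sum_i Z_i(x)$ is of order $(nh_n)^{-1}$ uniformly in $x$ is the entire content of the hard part for dependent data and is asserted rather than derived; in particular the martingale differences $D_k=\sum_{i\ge k}P_kZ_i(x)$ aggregate infinitely many projections, so neither their boundedness nor the variance proxy needed for a Freedman-type inequality follows from $\|K\|_\infty<\infty$ alone---one must prove summable decay of $\|P_kZ_i(x)\|$ from the coupling, which is where $\theta_k$ and $\psi_k$ do real work. Similarly, the bound $O\big(n^{-1}\sqrt{\Theta(n)+\Psi(n)}\big)$ for the non-martingale remainder is stated without derivation, and the double-sum structure of $\Theta(n)$ and $\Psi(n)$ (an outer sum over $j\in\Z$ of squared partial sums) reflects a second summation that your sketch never accounts for. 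Two smaller but real issues: $\E f_n(x)$ does not vanish for $|x|>b_n+Ch_n$, since it is an integral of $K$ against the true density, whose tail must be bounded separately from the moment condition; and the almost-sure conclusion requires summability over $n$ of $N_n\exp(-c\,nh_n\lambda_n^2)$, which forces the constant in $\lambda_n\asymp\sqrt{\log n/(nh_n)}$ to dominate the polynomial growth of $N_n$ and must be checked rather than assumed. In short, the skeleton matches \cite{WHH}, but the load-bearing estimates are missing.
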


Now consider our particular case when the filter is the linear process of (\ref{lp}).  In view of our assumption that the innovations have finite variance and because we assume the coefficients are square summable, $X_n\in \L^2$.  Moreover, we assume all of the bandwidth, kernel, and density conditions listed earlier, from which it easily follows that the marginal condition is satisfied.  For the short memory linear process (under the aforementioned assumptions), Wu et al. \cite{WHH} demonstrated that $\Theta(n) + \Psi(n) = O(n)$.  Also, notice that condition \textbf{B.1} implies that $\log n = o(n h_n)$.  Therefore, the theorem of Wu et al. \cite{WHH} applies to (\ref{lp}).

In addition, the well-known Taylor series argument under the conditions \textbf{K.2} and \textbf{K.3}, as well as \textbf{D.3}, yields
\begin{align*}
\sup_{x\in\R} \big|\E[f_n(x)]-f(x)\big|=O(h_n^2),
\end{align*}
so collectively, we see that
\begin{align*}
\sup_{x\in\R} \big|f_n(x)-f(x)\big|=O\bigg(\sqrt{\frac{\log n}{n h_n}} + n^{-\frac{1}{2}}\;l(n)+h_n^2\bigg).
\end{align*}
Basic methods of differential calculus show that $\sqrt{\frac{\log n}{n h_n}} + h_n^2$ is minimized when $h_n$ satisfies \textbf{B.1}.  Indeed, the optimum value of  $h_n$ has the exact order of  $\bigg(\frac{\log n}{n} \bigg)^\frac{1}{5}$. 

\textbf{Acknowledgement} 
The authors are grateful to the referees and Professor Daniel J. Henderson for carefully reading the paper and for insightful suggestions that significantly
improved the presentation of the paper.  
The research is supported in part by
the Simons Foundation Grant 586789 and the College of Liberal Arts
Faculty Grants for Research and Creative Achievement at the University of Mississippi. 
\newpage


\begin{thebibliography}{99}

\bibitem{Ahmad}
Ahmad I. A. and Lin P-E. 1976.  A nonparametric estimation of the entropy for absolutely continuous distributions.  {\it IEEE Transactions on Information Theory.} \textbf{22}: 372-375.

\bibitem{Beirlant} Beirlant J., Dudewicz E., Gy\"{o}rfi L., and Meulen E. C. 1997. Nonparametric entropy estimation: an overview. {\it International Journal of Mathematical and Statistical Sciences}. \textbf{6}: 17-39.

\bibitem{BE}
Bouzebda S. and Elhattab I. 2011. Uniform-in-bandwidth consistency for kernel-type estimators of Shannon's entropy. \textit{Electronic Journal of Statistics}. \textbf{5}: 440-459.

\bibitem {DG}
Devroye L. and Gy\"{o}rfi L. 1985. {\it Nonparametric Density Estimation: The $L^1$ View}. New York: Wiley.

\bibitem{Dmitriev}
Dmitriev, Y. G. and Tarasenko, F. P.  1973.  On the estimation functions of the probability density and its derivatives.  \textit{Theory of Probability and Its Applications}. \textbf{18}: 628-633.

\bibitem{D}
Duin, R. P. W. 1976. On the choice of smoothing parameters of Parzen estimators of probability density function. \textit{IEEE Transactions on Computers}. \textbf{C-25}, 1175-1179.

\bibitem{Halmos}
Halmos P. 1974. \textit{Measure Theory}.  Springer-Verlag.

\bibitem{Honda}
Honda T. 2000. Nonparametric density estimation for a long-range dependent linear process. {\it Annals of the Institute of Statistical Mathematics.} \textbf{52}:  599-611. 


\bibitem{Nadaraya}
Nadaraya E. A. 1989.  {\it Nonparametric Estimation of Probability Densities and Regression Curves}. Kluwer Academic Pub.

\bibitem{Parzen}
Parzen E. 1962. On estimation of a probability density and mode. {\it Annals of Mathematical Statistics.} \textbf{31}: 1065-1079.

\bibitem{R}
Rosenblatt M. 1956. Remarks on some nonparametric estimates of a density function. {\it Annals of Mathematical Statistics.} \textbf{27}: 832-837.

\bibitem{R}
Rudemo, M. 1982. Empirical choice of histograms and kernel density estimators. {\it Scandinavian Journal of Statistics}. 
 \textbf{9}: 65-78.

\bibitem{Sang}
Sang H., Sang Y., and Xu, F.  2018.  Kernel entropy estimation for linear processes.  \textit{Journal of Time Series Analysis.}  \textbf{39(4)}: 563-591.

\bibitem{Schimek}
Schimek M. G. 2000. {\it Smoothing and Regression: Approaches, Computation, and Application}. John Wiley \& Sons. 

\bibitem{Scott}
Scott D. W. 2015.  {\it Multivariate Density Estimation: Theory, Practice, and Visualization} (2nd edition). John Wiley \& Sons.

\bibitem{Shannon}
Shannon C. E. 1948. A mathematical theory of communication. {\it Bell System Technical Journal.} \textbf{27}: 379-423.

\bibitem{Shumway}
Shumway R. and Stoffer D.  2011.  \textit{Time Series Analysis and its Applications}  (3rd edition).  Springer.

\bibitem{Silverman}
Silverman B.W. 1986. {\it Density Estimation for Statistics and Data Analysis}. London: Chapman and
Hall.

\bibitem{S1}
Slaoui, Y. 2014. Bandwidth selection for recursive kernel density estimators defined by stochastic approximation method. {\it Journal of Probability and Statistics}.  ID 739640. doi:10.1155/2014/739640.

\bibitem{S2}
Slaoui, Y. 2018. Bias reduction in kernel density estimation. {\it Journal of Nonparametric Statistics}. \textbf{30}: 505-522.

\bibitem{Stute}
Stute, W. 1982. A law of the logarithm for kernel density estimator. \textit{Annals of  Probability.} \textbf{10}: 414-422.

\bibitem{Tran}
Tran L. T.  1992. Kernel density estimation for linear processes. {\it Stochastic Processes and their Applications.} \textbf{41}:  281-296. 

\bibitem{WJ}
Wand M. P and Jones M. C. 1995.  {\it Kernel Smoothing}. London: Chapman and Hall.

\bibitem{WHH}
Wu W. B, Huang Y. and Huang Y. 2010.  Kernel estimation for time series: an asymptotic theory. {\it Stochastic Processes and their Applications.}  \textbf{120}:  2412-2431. 

\bibitem{WM}
Wu W. B and Mielniczuk J. 2002.  Kernel density estimation for linear processes. {\it Annals of Statistics.} \textbf{30}:  1441-1459. 



\end{thebibliography}
\end{document}